\renewcommand{\MR}[1]{} 
\renewcommand{\PrintDOI}[1]{}
\newtheorem*{theorem*}{Theorem}
\newtheorem{theorem}{Theorem}[section]
\newtheorem{lemma}[theorem]{Lemma}
\newtheorem{proposition}[theorem]{Proposition}
\theoremstyle{definition}
\newtheorem*{example}{Example}
\theoremstyle{remark}
\newtheorem*{remark}{Remark}
\DeclarePairedDelimiter\ceil{\lceil}{\rceil}
\DeclarePairedDelimiter\floor{\lfloor}{\rfloor}
\DeclarePairedDelimiter\abs{\lvert}{\rvert}
\newcommand{\gt}{\tau} 
\newcommand{\into}{\hookrightarrow}
\title{$L^2$-Betti numbers of branched covers of hyperbolic manifolds}
\author{Grigori Avramidi\thanks{The first author thanks the Max Planck Institut f\"ur Mathematik for its hospitality and financial support.}, Boris Okun\thanks{The second author was partially supported by Simons Foundation grant MPS-TSM-00007773}, and Kevin Schreve\thanks{The third author was partially supported by the NSF grant DMS-2203325}}
\begin{document}

\maketitle
\begin{abstract}
    We show that Gromov--Thurston branched covers satisfy the Singer conjecture whenever the degree of the cover is not divisible by a finite set of primes determined by the base manifold and the branch locus.
\end{abstract}
\section{Introduction}
The Singer conjecture predicts vanishing of $L^2$-Betti numbers of closed aspherical manifolds outside the middle dimension.
In particular, for odd dimensional manifolds it predicts that all $L^2$-Betti numbers vanish.
On page 152 of \cite{gromovasymptotic}, Gromov discusses this conjecture and remarks that ``one cannot exclude a counterexample among (strongly pinched) ramified coverings of closed $(2k+1)$-dimensional manifolds of constant negative curvature''. Such branched covers (both strongly pinched and not) were constructed by Gromov and Thurston in \cite{gromovthurston}.
In the odd dimensional case, strongly pinched negative curvature can be used to show that the $L^2$-Betti numbers vanish outside the middle two dimensions using analytic methods \cite{donnellyxavier}, but this still leaves open the possibility that those two middle $L^2$-Betti numbers may be non-zero.
In this paper we use the skew field approach to $L^2$-Betti numbers together with special cube complex technology to prove the Singer conjecture for some of these branched covers.

\subsection*{Gromov--Thurston branched covers} 
We begin by recalling a method from \cite{gromovthurston} that constructs a family of cyclic branched covers of certain hyperbolic manifolds.

Let $M^n$ be a closed, oriented, hyperbolic $n$-manifold with two totally geodesic, (possibly disconnected) hypersurfaces $V_1,V_2$ which intersect transversely.
\begin{example}
    The group of automorphisms of the quadratic form $-\sqrt{2}x_{0}^2+x_1^2+\cdots+x_n^2$ over the ring of integers of the field $\mathbb Q(\sqrt 2)$ acts properly and cocompactly by isometries on hyperbolic space $\mathbb H^n$.
    It has a finite index normal subgroup $\Gamma$ that is torsion-free and acts by orientation preserving isometries.
    The quotient $M=\mathbb H^n/\Gamma$ is a closed, orientable, hyperbolic manifold with fundamental group $\Gamma$.
    It has orthogonal, embedded, totally geodesic  hypersurfaces $V_1$ and $V_2$, where $V_i$ is covered by the $x_i=0$ hyperplane $\mathbb H^{n-1}_{i}$ in $\mathbb H^n$.
    More generally, any hyperbolic manifold of simple type (these are, up to commensurability, the manifolds defined by quadratic forms) has a finite cover with a pair of orthogonal hypersurfaces.
    In odd dimensions $\neq 3,7$, all arithmetic hyperbolic manifolds are of simple type (see Remark 10.6 in \cite{hw}.)
\end{example}
Passing to a finite cover if necessary, we can make sure that the hypersurfaces $V_i$ are separating in the sense that---for each $i$---$M$ decomposes as a union of two compact manifolds with boundary $V_i$, glued along the boundary.\footnote{If $V_1$ is not separating, then intersecting with $V_1$ gives a surjective homomorphism $\pi_1M\rightarrow \mathbb Z/2$.
In the corresponding double cover $M'\rightarrow M$, the inverse image $V_1'$ of $V_1$ is separating because its intersection number with elements of $\pi_1M'$ is even.
If the inverse image $V_2'$ of $V_2$ does not separate $M'$, repeat the argument.}

Since $V_1$ and $V_2$ are separating hypersurfaces in the orientable manifold $M$, they are orientable.
Since they are transverse, the intersection $V=V_1\cap V_2$ is a codimension two submanifold with trivial normal bundle.
Let $M_0=M-(V\times \mathbb D^2)$.
Since $V_2$ separates $M$, $V$ separates $V_1$ into $V_1=V_1^+\cup_VV_1^{-}$, so $V$ is the boundary of an orientable $(n-1)$-dimensional submanifold $V_1^+$ of $M$.
Intersection with $V_1^+$ gives a surjective homomorphism $\phi:\pi_1(M_0)\rightarrow\mathbb Z$ which, for every positive integer $d$, defines a $d$-fold cyclic cover $M_0'\rightarrow M_0$.
Restricted to the boundary, the cover is $V\times\partial\mathbb D^2\rightarrow V\times\partial\mathbb D^2$ with the map being identity on the first factor and degree $d$ on the second factor.
Hence gluing in $V\times \mathbb D^2$ along the boundary of the manifold $M_0'$ gives the $d$-fold cyclic branched cover $\widehat M\rightarrow M$.

The hyperbolic manifolds of simple type have the additional property that they have finite covers with special\footnote{We call a group \emph{special} if it is the fundamental group of a compact special cube complex in the sense of \cite{hw}.}
fundamental group \cites{hw,bhw}.
Giralt showed in \cite{giralt} that when $M$ has special fundamental group, the cyclic branched cover $\widehat M$ does, as well.
We use her result to prove the following theorem.

\begin{theorem*}
    Suppose $M$ is a closed, orientable, hyperbolic $n$-manifold with virtually special fundamental group and $V_1,V_2$ are two separating, totally geodesic hypersurfaces in $M$ intersecting transversely in $V=V_1\cap V_2$.
    Then, there is a positive integer $m$ (determined by $M,V_1$ and $V_2$) so that for $d$ relatively prime to $m$, the $d$-fold cyclic branched cover $\widehat M\rightarrow M$ satisfies the Singer conjecture:
\[ 
b^{(2)}_{\neq n/2}(\widehat M)=0. 
\]
\end{theorem*}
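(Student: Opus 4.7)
The natural starting point is the skew-field approach to $L^2$-Betti numbers. Giralt's theorem guarantees that $\pi_1(\widehat M)$ is virtually special, hence embeds in a right-angled Artin group, hence is virtually RFRS. By Jaikin-Zapirain's theorem one then has a Hughes-free division ring $\mathcal D\supseteq \mathbb Q[\pi_1(\widehat M)]$ satisfying
\[
b^{(2)}_i(\widehat M)\;=\;\dim_{\mathcal D}H_i(\widehat M;\mathcal D).
\]
So the task reduces to showing $H_i(\widehat M;\mathcal D)=0$ for every $i\ne n/2$.

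To exploit the explicit geometry, I would use the Gromov-Thurston decomposition $\widehat M=\widehat{M}_0\cup_{V\times S^1}(V\times\mathbb D^2)$ and apply Mayer-Vietoris with $\mathcal D$-coefficients. The three pieces to understand are $H_*(\widehat{M}_0;\mathcal D)$, the tube $H_*(V\times\mathbb D^2;\mathcal D)\cong H_*(V;\mathcal D|_{\pi_1V})$, and the gluing locus $H_*(V\times S^1;\mathcal D)\cong H_*(V;\mathcal D|_{\pi_1V})\otimes H_*(S^1;\mathcal D)$ via Shapiro plus Künneth. In each case Shapiro's lemma pushes the computation onto an infinite-index subgroup of $\pi_1(\widehat M)$, where Hughes-freeness of $\mathcal D$ should supply a substantial dimension drop. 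The $\widehat M_0$ piece in addition carries the free $\mathbb Z$-action of cyclic deck transformations, which one can feed into the RFRS tower for $\pi_1(\widehat M)$ and thereby collapse a further degree of $\mathcal D$-homology.

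The role of the arithmetic condition $\gcd(d,m)=1$ is to make the cyclic branching compatible with the finite covers supplied by Giralt. If $\tilde M\to M$ is a finite cover with special fundamental group, then for $d$ coprime to its degree the fiber product $\tilde M\times_M\widehat M$ is a branched cover of $\tilde M$ to which Giralt's construction applies on the nose (not merely virtually), and the $\mathbb Z/d$-deck group embeds into the RAAG-like structure without torsion collisions. I expect $m$ to collect the analogous indices for covers making $V_1$, $V_2$, and $V=V_1\cap V_2$ special, together with any residual torsion orders appearing in the hierarchy of the special cube complex built from the arrangement.

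The main obstacle, in my view, is the vanishing of $H_i(V;\mathcal D|_{\pi_1V})$ outside the middle. This is not directly the Singer conjecture for $V$ (which would require vanishing over its own skew field), but rather for $V$ with coefficients in the strictly larger skew field coming from the ambient branched cover; one needs a genuine dimension drop from the infinite-index, quasiconvex embedding $\pi_1V\hookrightarrow\pi_1(\widehat M)$. I would attempt this either by induction on dimension---treating $V$ together with the transverse arrangement $V_1\cap V$ and $V_2\cap V$ as a lower-dimensional instance of the theorem---or by a direct Hughes-free argument using a hierarchy of the RAAG containing $\pi_1(\widehat M)$ in which $\pi_1V$ is successively cut by codimension-one walls. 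Combining such a vanishing with the cyclic-cover input for $\widehat M_0$ should close the Mayer-Vietoris sequence in all degrees $\ne n/2$.
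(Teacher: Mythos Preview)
Your proposal misses the central mechanism of the argument and misidentifies both the meaning of $m$ and the main obstacle.

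First, the integer $m$ has nothing to do with degrees of finite covers or torsion collisions in a RAAG hierarchy. In the paper's proof, one works with the skew field $D=D_{\mathbb C G}$ attached to $G=\pi_1M$ (not to $\pi_1\widehat M$), and studies $H_*(M_0;D[\tau,\tau^{-1}])$, where $\tau$ records the $\mathbb Z$-cover of the complement $M_0$. Since $D[\tau,\tau^{-1}]$ is a PID, this homology is a direct sum of cyclic modules $R/Rp_i(\tau)$; the integer $m$ is the product of the orders of the finitely many roots of unity occurring among the central roots of the $p_i$. For $d$ coprime to $m$, multiplication by $\tau^d-1=\prod(\tau-\zeta)$ is an isomorphism on the torsion part not divisible by $\tau-1$, so the long exact sequence shows $H_k(M_0';D)=0$ iff $H_k(M_0;D)=0$. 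This is a Fox--Alexander-polynomial style argument and is the heart of the proof; your plan contains no analogue of it.

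Second, the obstacle you flag---vanishing of $H_i(V;\mathcal D|_{\pi_1V})$---is in fact the easy part. Each component $V_0$ of $V$ is a closed hyperbolic manifold with $\pi_1V_0\hookrightarrow G$, and the division closure of $\mathbb C\pi_1V_0$ in $D_{\mathbb C G}$ coincides with $D_{\mathbb C\pi_1V_0}$; hence $b_i(V;D)=b_i^{(2)}(V)$, which vanishes outside the middle by Dodziuk. No induction on dimension or hierarchy argument is needed. The genuine difficulty is passing from $H_{<n/2}(M_0;D)=0$ to $H_{<n/2}(M_0';D)=0$ for the $d$-fold cyclic cover, and your sentence about feeding a ``free $\mathbb Z$-action of cyclic deck transformations'' into the RFRS tower does not describe a workable mechanism (the deck group is $\mathbb Z/d$, not $\mathbb Z$, and RFRS alone gives no control here).

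Finally, the choice to work over $D_{\mathbb C G}$ rather than over the skew field of $\pi_1\widehat M$ is deliberate: one needs $\mathbb C$ in the center so that $\tau^d-1$ factors into central linear terms and the root-counting lemma applies. Giralt's theorem enters only at the very end, to bound $b_i^{(2)}(\widehat M)\le b_i(\widehat M;D_{\mathbb C G})$ via the inf formula.
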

\begin{remark}
    It is shown in \cite{gromovthurston} that the branched covers $\widehat M$ have metrics of negative curvature, but for fixed $M$ and $V$ and large enough $d$, they do not admit metrics of constant negative curvature.
\end{remark}
\begin{remark}
For even dimensional Gromov--Thurston branched covers, there is a proof of the Singer conjecture that works for any degree, and doesn't require any assumptions on the fundamental group of $M$. We describe it in Section \ref{remarks}.
\end{remark}

\section{Proof for prime power branched covers\label{primeproof}
} In this section we give a proof of the theorem in the special case when the degree of the branched cover is a prime power.
In outline, we will first translate analytic statements about $L^2$-Betti numbers of the hyperbolic manifolds $M$ and $V$ to skew field Betti numbers for a certain skew field $D$ (associated to $\pi_1M$) of prime characteristic $p$, then proceed to compute these $D$-Betti numbers for the complement $M_0$, its $\mathbb Z/p^r$-cover $M_0'$ and the branched cover $\widehat M$, before finally translating back to the desired statement about $L^2$-Betti numbers of $\widehat M$.

\subsection*{Reduction to special fundamental groups} 
By assumption, $M$ has a finite cover $\pi:M'\rightarrow M$ whose fundamental group is special.
Let $V_i'=\pi^{-1}(V_i)$ be the preimage of $V_i$ under the covering map.
Note that $V_i'$ separates $M'$ since $V_i$ separates $M$.
Moreover, $V'_1\cap V'_2$ is the preimage $\pi^{-1}(V)$ of $V$, and we will denote it $V'$.
Therefore, we can form the $d$-fold cyclic branched cover $\widehat M'$ of $M'$ branched along $V'$ as described on page 2. Its defining homomorphism $\phi':\pi_1(M'-V'\times\mathbb D^2)\rightarrow \mathbb Z$ is given by intersection with $V_1'^+$, so it factors through $\phi:\pi_1(M-V\times\mathbb D^2)\rightarrow\mathbb Z$ (given by intersection with $V_1^+$).
It follows that the branched cover $\widehat M'$ is the pullback of $\widehat M\rightarrow M$ via $M'\rightarrow M$, i.e.
it fits into the commutative square
\[
\begin{tikzcd}[sep=scriptsize] \widehat M' \rar \dar & \widehat M \dar\\
    M' \rar & M,
\end{tikzcd}
\]
where the vertical maps are branched covers of degree $d$ and the horizontal maps are covers of degree $\abs{M'\rightarrow M}$.
In particular, $\widehat M'$ is a finite cover of $\widehat M$, and by multiplicativity of $L^2$-Betti numbers we have $b^{(2)}_i(\widehat M)=b^{(2)}_i(\widehat M')/\abs{M'\rightarrow M}$.
Therefore, if the theorem is true for $M'$ then it is also true for $M$.
So, we may assume $M$ has special fundamental group and will do so for the rest of the paper.

\subsection*{$L^2$-Betti numbers via skew fields}
We use the skew field approach to $L^2$-Betti numbers, see e.g.
\cite{aos}*{Section 3} for additional details.
Let $G=\pi_1M$.
Since $G$ is special, it is a subgroup of a right-angled Artin group. 
This implies it is residually torsion-free nilpotent and hence bi-orderable.
Therefore, its group ring $\mathbb FG$ embeds in the (Malcev--Neumann) skew field of power series on $G$ with well-ordered support and coefficients in $\mathbb F$ for any field $\mathbb F$.
The division closure of $\mathbb FG$ in this skew field is independent of the choice of order (by \cite{hughes}) and we denote it by $D_{\mathbb FG}$.
Therefore, we can take homology of $M$ with local coefficients in $D_{\mathbb FG}$, and since $D_{\mathbb FG}$ is a skew field we have associated Betti numbers.
Corollaries 4.1 and 4.2 of \cite{aos} show that for large enough primes $p$ we have
\[
 b_i(M;D_{\mathbb F_pG})=b_i(M;D_{\mathbb QG})=b_i^{(2)}(M).
\]
Since $V$ is totally geodesic in $M$, each component $V_0$ of $V$ gives an inclusion $\pi_1V_0<G$.
Therefore $\pi_1V_0$ is residually torsion-free nilpotent and we can define $D_{\mathbb F\pi_1V_0}$ as before.
Applying Corollaries 4.1 and 4.2 of \cite{aos} to $V_0$ gives
\begin{equation}\label{component}
    b_i(V_0;D_{\mathbb F_p\pi_1V_0})=b_i(V_0;D_{\mathbb Q\pi_1V_0})=b_i^{(2)}(V_0)
\end{equation}
for large enough $p$.
If we use the order on $\pi_1V_0$ induced from $G$, then the division closure of $\mathbb F\pi_1V_0$ in $D_{\mathbb FG}$ agrees with $D_{\mathbb F\pi_1V_0}$, implying that 
\[
H_i(V_0;D_{\mathbb F\pi_1V_0})\otimes_{D_{\mathbb F\pi_1V_0}}D_{\mathbb FG}=H_i(V_0;D_{\mathbb FG})
\]
and consequently $b_i(V_0;D_{\mathbb F\pi_1V_0})=b_i(V_0;D_{\mathbb FG})$.
Substituting this into \eqref{component} and summing over the components of $V$ gives 
\[
 b_i(V;D_{\mathbb F_pG})=b_i(V;D_{\mathbb QG})=b_i^{(2)}(V).
\] 
for large enough $p$.
Fix a prime $p$ for which the equalities for $V$ and $M$ hold, and set $D:=D_{\mathbb F_pG}$ to conserve notation.

\subsection*{$D$-Betti numbers of the complement $M_0$} 
Since both $M$ and $V$ are closed hyperbolic manifolds, their $L^2$-Betti numbers vanish outside the middle dimension (\cite{dodziuk}), implying that
\begin{align}
    H_{\neq n/2}(M^n;D)&=0,\label{mvanish}
    \\
    H_{\neq n/2-1}(V^{n-2};D)&=0.\label{vvanish}
\end{align}
Excision and Poincar\'e duality implies
\begin{multline*}
    H_i(M,M_0;D)\cong H_i(V\times\mathbb D^2,\partial;D)\cong H^{n-i}(V\times\mathbb D^2;D) \\
    \cong H^{n-i}(V;D)\cong H_{i-2}(V;D)
\end{multline*}

and we conclude from \eqref{vvanish} that
\begin{equation}\label{relvanish}
    H_{\neq n/2+1}(M,M_0;D)=0.
\end{equation}
For later use, note that the same argument applied to the pair $(\widehat M,M'_0)$ implies
\begin{equation}\label{rel'vanish}
    H_{\neq n/2+1}(\widehat M,M'_0;D)=0.
\end{equation}
The long exact sequence for the pair $(M,M_0)$
\begin{equation*}
    \cdots\rightarrow H_{*+1}(M,M_0;D)\rightarrow H_*(M_0;D)\rightarrow H_{*}(M;D)\rightarrow\cdots
\end{equation*}
together with \eqref{mvanish} and \eqref{relvanish} imply
\begin{equation}\label{compvanish}
    H_{<n/2}(M_0;D)=0.
\end{equation}
\subsection*{$D$-Betti numbers of the $\mathbb Z/p^r$-cover $M'_0$} 
Next, we claim that $H_{<n/2}(M'_0;D)=0$.

To see this, we will use the interpretation of $D$-Betti numbers of $M_0$ and $M_0'$ as the infimum of normalized $\mathbb F_p$-Betti numbers over finite covers pulled back from $M$, and the fact that normalized $\mathbb F_p$-Betti numbers are monotone in $p$-power covers, both of which we recall next.
Denote the degree of a finite cover $Y'\rightarrow Y$ by $\abs{Y'\rightarrow Y}$.
\begin{itemize}
    \item {\bf Inf formula:} Given a finite complex $Y$, a residually torsion-free nilpotent group $\Gamma$, and a homomorphism $\pi_1Y\rightarrow \Gamma$, Theorem 3.6 of \cite{aos} implies
    \begin{equation}\label{infformula}
        b_i(Y;D_{\mathbb F\Gamma})=\inf_{Y'\rightarrow Y} {b_i(Y';\mathbb F)\over \abs{Y'\rightarrow Y}}.
    \end{equation}
    where the inf is over the finite covers $Y'\rightarrow Y$ pulled back from $B\Gamma$.
    \item {\bf p-monotonicity:} If $X'\rightarrow X$ is a regular cover of degree $p^r$, then Theorem 1.6 in \cite{blls} implies
    \[
    b_i(X';\mathbb F_p)\leq p^{r}\cdot b_i(X;\mathbb F_p).
    \]

\end{itemize}
Now, let $X\rightarrow M_0$ be a finite cover pulled back from $M$ and $X'\rightarrow M_0'$ its pullback to $M_0'$.
These covers fit into
\[
\begin{tikzcd}[sep=scriptsize] 
    X' \rar \dar & X \dar\\
    M_0' \rar & M_0
\end{tikzcd}
\]
where the horizontal maps are regular covers of degree $p^r$ and the vertical maps are covers of the same finite degree $\abs{X'\rightarrow M_0'}=\abs{X\rightarrow M_0}$.
Therefore 
\[ 
{b_i(X';\mathbb F_p)\over \abs{X'\rightarrow M'_0}}\leq p^r{b_i(X;\mathbb F_p)\over\abs{X\rightarrow M_0}},
\]
and we conclude from the inf formula for $D=D_{\mathbb F_pG}$-Betti numbers that 
\[
 b_i(M'_0;D)=\inf_{X'\rightarrow M'_0}{b_i(X';\mathbb F_p)\over\abs{X'\rightarrow M_0'}}\leq p^r\cdot\inf_{X\rightarrow M_0}{b_i(X;\mathbb F_p)\over\abs{X\rightarrow M_0}}=p^r\cdot b_i(M_0;D),
 \] 
 where the infs in both cases are over finite covers (of $M'_0$ on the left and $M_0$ on the right) that are pulled back from $M$.
Therefore, \eqref{compvanish} implies
\begin{equation}\label{comp'vanish}
    H_{<n/2}(M_0';D)=0,
\end{equation}
which finishes the proof of the claim.

\subsection*{$D$-Betti numbers of the branched cover $\widehat M$} 
Finally, we look at the long exact sequence of the pair $(\widehat M,M_0')$ corresponding to the branched cover
\begin{equation*}
    \cdots\rightarrow H_*(M_0';D)\rightarrow H_*(\widehat M;D)\rightarrow H_{*}(\widehat M,M_0';D)\rightarrow\cdots
\end{equation*}
and note that \eqref{comp'vanish} and \eqref{rel'vanish} imply $H_{<n/2}(\widehat M;D)=0$ and so, by Poincar\'e duality, also $H_{>n/2}(\widehat M;D)=0$.
In summary, we have shown that
\begin{equation}\label{branchedvanish}
    H_{\neq n/2}(\widehat M;D_{\mathbb F_{p}G})=0.
\end{equation}

\subsection*{$L^2$-Betti numbers of $\widehat M$} 
It remains to relate this vanishing to the $L^2$-Betti numbers of $\widehat M$.
To that end, we use the result of Giralt  \cite{giralt}*{Theorem 2} that $\pi_1\widehat M$ is special.
This implies that $\mathbb F\pi_1\widehat M$ embeds in a skew field $D_{\mathbb F\pi_1\widehat M}$ (defined as before) and that for $\mathbb F=\mathbb Q$ this skew field computes the $L^2$-Betti numbers of $\widehat M$.
The inf formula lets us relate the $D_{\mathbb F\pi_1\widehat M}$-Betti numbers to $D_{\mathbb F G}$-Betti numbers.
Altogether, we obtain the following monotonicity formula (see also Lemma 2.6 of \cite{aos2}) 
\[ 
b_i^{(2)}(\widehat M)=b_i(\widehat M;D_{\mathbb Q\pi_1\widehat M})\leq b_i(\widehat M;D_{\mathbb F_p\pi_1\widehat M})\leq b_i(\widehat M;D_{\mathbb F_pG}) 
\]
where the first equality is Corollary 4.2 of \cite{aos}, the first inequality follows from the inf formula \eqref{infformula} and the fact that $b_i(X;\mathbb Q)\leq b_i(X;\mathbb F_p)$, and the second inequality also follows from the inf formula since the left term is an inf over all finite covers of $\widehat M$ while the right term is an inf over only those finite covers that are pulled back from $M$.

So, we conclude from \eqref{branchedvanish} that $\widehat M$ satisfies the Singer conjecture.
This finishes the proof of the theorem for prime power covers.

\section{Remarks\label{remarks}}
\subsection*{Exceptional primes}
Multiplicativity of skew field Betti numbers \cite{aos}*{Lemma 3.3} shows the same prime $p$ works if we start with another pair $(M',V')$ commensurable to $(M,V)$, in the sense that $M$ and $M'$ have a common finite cover and $V$ and $V'$ have a common finite cover.
So, the exceptional primes to which the above argument does not apply are determined by commensurability classes of $M$ and $V$.
\subsection*{$\mathbb F_p$-Singer property in special covers}
Say that a closed $n$-manifold $N$ with special fundamental group satisfies the $\mathbb F_p$-Singer property if $H_{\neq n/2}(N;D_{\mathbb F_p\pi_1N})=0$.
Looking at what we used in the proof above, we observe that it gives the following: Suppose that
\begin{itemize}
    \item $M^n$ is a closed, orientable $n$-manifold,
    \item $V^{n-2}\subset M^n$ is a codimension two, $\pi_1$-injective, closed, orientable submanifold,
    \item $\widehat M\rightarrow M$ is a $p^r$-fold cyclic branched cover of $M$ branched over $V$, and
    \item both $\pi_1M$ and $\pi_1\widehat M$ are special.
\end{itemize}
If $M$ and each component of $V$ satisfy the $\mathbb F_p$-Singer property, then $\widehat M$ satisfies the $\mathbb F_p$-Singer property.

\subsection*{Singer conjecture in even dimensions}
When $M$ is even dimensional, there is another argument that makes more use of the hypersurfaces $V_i$, proves the Singer conjecture for branched covers of all degrees, and does not require special fundamental groups. 
In particular, Mayer--Vietoris for $L^2$-Betti numbers shows that for $\pi_1$-injective splittings $X_1\cup_ZX_2$ we have: 
\begin{itemize}
\item 
$b^{(2)}_k(Z)=0$ and $b^{(2)}_k(X_1\cup_Z X_2)=0$ implies $b_k^{(2)}(X_i)=0$,
\item 
$b^{(2)}_k(X_i)=0$ and $b^{(2)}_{k-1}(Z)=0$ implies $b^{(2)}_k(X_1\cup_Z X_2)=0$.
\end{itemize}
We have such splittings for the closed hyperbolic $n$-manifold $M=M^+\cup_{V_1}M^-$, and its hypersurface $V_1=V_1^+\cup_VV_1^-$. 
So, via the first bullet point, the Singer conjecture for the closed hyperbolic manifolds $M$ and $V_1$ implies that 
\begin{equation}
\label{M}
b^{(2)}_{>n/2}(M^{\pm})=0,
\end{equation}
while the Singer conjecture for the closed hyperbolic manifolds $V_1$ and $V$ implies 
\begin{equation}
\label{V}
b^{(2)}_{>(n-1)/2}(V_1^{\pm})=0.
\end{equation}
The $d$-fold branched cover $\widehat M$ can be decomposed as 
\[
\widehat M=M^+\cup_{V_1}M_d
\]
where $M_d$ is defined inductively by 
\[
M_1=M^-, \quad \text{ and }\quad M_{i+1}=M_i\cup_{V_1^+}M^+\cup_{V_1^-}M^-,
\]
and again all splittings appearing in this description are $\pi_1$-injective. 
For $k>\ceil{n/2}$ we have $k-1>(n-1)/2$, so these splittings together with \eqref{M}, \eqref{V}, and the Singer conjecture for $V_1$ imply (via the second bullet point) that $b^{(2)}_{k}(\widehat M)=0$. 
By Poincar\'e duality, we get the same conclusion for $k<\floor{n/2}$. 
When $n$ is even, this establishes the Singer conjecture for $\widehat M^n$, but when $n$ is odd it only shows that the $L^2$-Betti numbers vanish outside the middle two dimensions.
The analytic results of \cite{donnellyxavier} give the same conclusions under the additional curvature pinching assumption $-1\leq K\leq -({n-2\over n-1})^2$.

\subsection*{Some composite covers via iteration} 
Iterating the method in Section~\ref{primeproof} lets us establish the Singer conjecture for some Gromov--Thurston branched covers of composite degree.
Here is how it works in the simplest instance where there are two prime factors.
Denote by $\widehat M_k$ the branched cover of degree $k$.
If $d=p^rq^s$ for primes $p$ and $q$, then $\widehat M_{p^r}$ has special fundamental group and, for large $p$, our result shows that it satisfies the Singer conjecture.
Therefore, by Corollary 4.2 of \cite{aos}, it also satisfies $\mathbb F_q$-Singer for large $q$ (depending on $M,V$ \emph{and} $p^r$).
For such $q$, we can apply the same argument to $\widehat M_{p^rq^s}$, thought of as the $q^s$-degree branched cover of $\widehat M_{p^r}$ and conclude that $\widehat M_{p^rq^s}$ satisfies the Singer conjecture.
The $q$ for which this works depends on $p^r$, so we don't get the full theorem this way.

\section{Homology vanishing in cyclic covers} In order to prove the theorem for cyclic branched covers of other degrees $d$, we investigate when homology vanishing is preserved in $d$-fold cyclic covers.
We do that in this section and then return to the proof of the theorem in the next.
Our argument is inspired by Fox's result that branched $d$-fold cyclic covers of knot complements in $S^3$ are rational homology spheres when the Alexander polynomial does not vanish at any $d$-th roots of unity (6.2 in \cite{fox3}).
\subsection*{Setup}
Let $X$ be a finite complex. Suppose $\Gamma:=\pi_1X$ surjects onto $\mathbb Z$ via $\phi:\Gamma\rightarrow \mathbb Z$, let $\Gamma_d:=\ker(\Gamma\rightarrow\mathbb Z\rightarrow\mathbb Z/d)$, and denote by $X_d=\widetilde X/\Gamma_d$ the corresponding $d$-fold cyclic cover of $X$. Let $\psi:\mathbb Z\Gamma\rightarrow D$ be a ring homomorphism to a skew field.

\subsection*{Long exact sequence for $H_*(X_d;D)$} 
Let $R:=D[\tau,\tau^{-1}]$ be the Laurent polynomial ring with coefficients in $D$.
It is a ring over $\mathbb Z[\Gamma]$ via the homomorphism $\mathbb Z[\Gamma]\rightarrow D[\tau,\tau^{-1}]$ induced by $g\mapsto\psi(g)\tau^{\phi(g)}$.
Since $\tau$ is central, we have a short exact sequence of $R$-bimodules: 
\begin{equation}\label{Rmodules}
    0\rightarrow R\xrightarrow{(\tau^d-1)\cdot} R\rightarrow R/(\tau^d-1)\rightarrow 0
\end{equation}
Applying $\otimes_{\mathbb Z\Gamma}C(\widetilde X)$ to this sequence gives a short exact sequence of chain complexes of left $R$-modules 
\[ 
0\rightarrow C_*(X;R) \xrightarrow{(\tau^d-1)\cdot} C_*(X;R)\rightarrow C_*(X;R/(\tau^d-1))\rightarrow 0 
\] 
and associated long exact homology sequence 
\[
\cdots\rightarrow H_*(X;R)\xrightarrow{(\tau^d-1)\cdot} H_*(X;R)\rightarrow H_*(X;R/(\tau^d-1))\rightarrow\cdots 
\] 
The third term computes the $D$-homology of the $d$-fold cyclic cover $X_d$:

\begin{lemma}
    We have an isomorphism of left $D$-modules 
    \[
    H_*(X_d;D)\cong H_*(X;R/(\tau^d-1)).
    \]
\end{lemma}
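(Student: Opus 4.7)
The plan is to interpret $R/(\tau^d-1)$ as an induced module and then invoke the standard change-of-rings (Shapiro-type) isomorphism. Since $\widetilde X$ is simply connected and $X_d=\widetilde X/\Gamma_d$, the universal cover of $X_d$ is $\widetilde X$, so by definition
\[
H_*(X_d;D)=H_*\bigl(D\otimes_{\mathbb Z\Gamma_d}C_*(\widetilde X)\bigr),
\]
where $D$ is a right $\mathbb Z\Gamma_d$-module via $\psi|_{\Gamma_d}$. Likewise $H_*(X;R/(\tau^d-1))=H_*\bigl(R/(\tau^d-1)\otimes_{\mathbb Z\Gamma}C_*(\widetilde X)\bigr)$. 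So, by associativity of tensor products, the lemma reduces to producing an isomorphism
\[
D\otimes_{\mathbb Z\Gamma_d}\mathbb Z\Gamma\;\xrightarrow{\sim}\;R/(\tau^d-1)
\]
of $(D,\mathbb Z\Gamma)$-bimodules.

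I would define this map by $a\otimes g\mapsto a\,\psi(g)\,\tau^{\phi(g)}$. It is well defined over $\mathbb Z\Gamma_d$ because for $h\in\Gamma_d$ we have $\phi(h)\in d\mathbb Z$, hence $\tau^{\phi(h)}=1$ in $R/(\tau^d-1)$, and so $f(ah\otimes g)=a\psi(h)\psi(g)\tau^{\phi(g)}=f(a\otimes hg)$. It is a right $\mathbb Z\Gamma$-map because $\tau$ is central in $R$ and $g\mapsto\psi(g)\tau^{\phi(g)}$ is multiplicative, and it is evidently a left $D$-map. To check bijectivity, pick any $t\in\Gamma$ with $\phi(t)=1$, giving a coset decomposition $\Gamma=\bigsqcup_{i=0}^{d-1}\Gamma_d t^i$. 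Then $\{1\otimes t^i\}_{i=0}^{d-1}$ is a free left $D$-basis of $D\otimes_{\mathbb Z\Gamma_d}\mathbb Z\Gamma$, and its image $\{\psi(t)^i\tau^i\}$ is a free left $D$-basis of $R/(\tau^d-1)$ because $\psi(t)\in D^\times$ and $\{\tau^i\}_{i=0}^{d-1}$ is the standard $D$-basis of the quotient. Hence the map is an isomorphism.

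Combining the displayed identifications gives a chain-level isomorphism $R/(\tau^d-1)\otimes_{\mathbb Z\Gamma}C_*(\widetilde X)\cong D\otimes_{\mathbb Z\Gamma_d}C_*(\widetilde X)$ of left $D$-module complexes, and passing to homology yields the stated isomorphism. No deep obstacle appears; the one spot requiring care is keeping the left and right module structures straight through the change-of-rings step, in particular verifying that the $\mathbb Z\Gamma$-module structure on $R/(\tau^d-1)$ induced by the ring map $g\mapsto\psi(g)\tau^{\phi(g)}$ matches the one obtained by inducing up from $\Gamma_d$.
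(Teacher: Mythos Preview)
Your proof is correct and follows essentially the same approach as the paper's: both reduce to identifying the $(D,\mathbb Z\Gamma)$-bimodule $D\otimes_{\mathbb Z\Gamma_d}\mathbb Z\Gamma$ with $R/(\tau^d-1)$ via the map $a\otimes g\mapsto a\,\psi(g)\,\tau^{\phi(g)}$. The only cosmetic difference is that the paper verifies bijectivity by writing down the explicit inverse $x\tau^i\mapsto x\psi(t^{-i})\otimes t^i$, whereas you argue via a basis; both are equally valid.
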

\begin{proof}
    The homology group $H_*(X_d;D)$ is computed from the chain complex
    \begin{equation}\label{chainiso}
        D\otimes_{\mathbb Z\Gamma_d}C_*(\widetilde X)\cong(D\otimes_{\mathbb Z\Gamma_d}\mathbb Z\Gamma)\otimes_{\mathbb Z\Gamma}C_*(\widetilde X).
    \end{equation}
    To prove the lemma, we need to identify the $D-\mathbb Z\Gamma$-module in parentheses with $R/(\tau^d-1)=D[\tau]/(\tau^d-1)$.
    We do this via the map

\begin{align*}
D\otimes_{\mathbb Z\Gamma_d}\mathbb Z\Gamma&\rightarrow D[\tau]/(\tau^d-1)\\
x\otimes g&\mapsto x\psi(g)\tau^{\phi(g)}.
\end{align*}
It is easy to check that this map is well-defined and invertible, with inverse given by $x\tau^i\mapsto x\psi(t^{-i})\otimes t^i$, where $t\in\Gamma$ is any\footnote{Different choices of $t$ give the same map.} element with $\phi(t)=1\in\mathbb Z$. 
\end{proof}

\subsection*{Structure of $D[\tau,\tau^{-1}]$-modules}
Since $D$ is a skew field, Theorem 1.3.2 in \cite{cohnbook} shows that $R=D[\tau,\tau^{-1}]$ is a left and right principal ideal domain.
This implies (Theorem 1.4.10 in \cite{cohnbook}) that
\[
H_*(X;R)\cong \bigoplus_{i = 1}^n {R\over R p_i(\tau)}.
\]

\subsection*{Central roots}
For a non-zero element $z\in D^*$, define the `evaluation at $z$ map' by
\begin{align*}
    ev_z:D[\tau,\tau^{-1}]&\rightarrow D\\
    \sum a_i \tau^i &\mapsto \sum a_i z^i.
\end{align*}
We denote $ev_z(p(\tau))$ by $p(z)$.
Note that when $z$ is central in $D$, then the map $ev_z$ is a ring homomorphism.
The non-zero roots of $p(\tau)$ are elements $z\in D^*$ with $p(z)=0$.
The degree of a non-zero Laurent polynomial $a_{-m}\gt^{-m} + \dots + a_n\gt^n$ is defined to be $m+n$.
This satisfies 
\[ 
\deg(p(\gt)q(\gt)) = \deg(p(\gt)) + \deg(q(\gt)).
\]
\begin{lemma}\label{centralroots}
    A degree $n$ Laurent polynomial $p(\tau)$ in $R$ has at most $n$ non-zero, central roots in $D$.
\end{lemma}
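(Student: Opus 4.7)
The plan is to mimic the classical commutative proof that a degree $n$ polynomial has at most $n$ roots, exploiting the centrality hypothesis to handle the non-commutativity of $D$. The two key enabling facts are that (i) when $z \in D$ is central, the evaluation map $ev_z$ is a ring homomorphism (as the paper already notes), and (ii) when $z$ is central, the element $\tau - z$ is central in $R = D[\tau,\tau^{-1}]$, because $\tau$ commutes with $D$ by construction and $z$ commutes with $D$ by centrality. Centrality of $\tau - z$ means left and right divisibility by $\tau - z$ coincide, so the usual one-sided Euclidean division in $R$ suffices.

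First, I would reduce to the polynomial case: writing $p(\tau) = a_{-m}\tau^{-m} + \cdots + a_n \tau^n$, multiply through by $\tau^m$ to obtain an honest polynomial $\widetilde p(\tau) \in D[\tau]$ of ordinary degree $m+n$, which equals the Laurent degree $n$ of $p$. Since $\tau^m$ is a unit in $R$ and $z \in D^*$, the non-zero central roots of $p$ are exactly those of $\widetilde p$. So I may replace $R$ by $D[\tau]$ and $p$ by $\widetilde p$, and assume $p$ is a polynomial.

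Next, the induction step. Since $D$ is a skew field, $D[\tau]$ admits a right Euclidean algorithm, so for any central root $z$ I can write $p = q \cdot (\tau - z) + r$ with $\deg r < 1$, i.e., $r \in D$. Applying the ring homomorphism $ev_z$ (legitimate because $z$ is central) gives $0 = p(z) = q(z)(z-z) + r = r$, so $\tau - z$ divides $p$ and $\deg q = n-1$. For any other central root $w \neq z$ of $p$, applying $ev_w$ to $p = q \cdot (\tau - z)$ yields $0 = p(w) = q(w)(w-z)$; since $D$ is a skew field and $w - z \neq 0$, this forces $q(w) = 0$. Thus every central root of $p$ distinct from $z$ is a central root of $q$, and by induction on degree $q$ has at most $n-1$ non-zero central roots, so $p$ has at most $n$. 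The base case $n = 0$ is immediate because a non-zero constant has no roots.

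The only place where non-commutativity could cause trouble is the divisibility and cancellation steps, but both are resolved by centrality: centrality of $z$ makes $ev_z$ multiplicative (so the remainder equals $p(z)$), centrality of $\tau - z$ removes any left/right ambiguity in the division, and the skew-field property of $D$ gives cancellation $q(w)(w-z) = 0 \Rightarrow q(w) = 0$. I do not expect any serious obstacle — this is essentially the commutative proof carried out carefully enough to see that every invocation of commutativity in the standard argument is covered by the centrality hypothesis on the roots.
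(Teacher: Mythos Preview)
Your proof is correct and follows essentially the same route as the paper's: right Euclidean division by $\tau - z$, evaluation at the central element $z$ to kill the remainder, then evaluation at a second central root $w$ to force $q(w)=0$, and induction on degree. The only cosmetic difference is your explicit reduction from Laurent polynomials to ordinary polynomials (the paper just invokes long division directly); note a small notational clash in that step---you use $n$ both for the top exponent of $p$ and for its Laurent degree, which disagree unless $m=0$---but the argument is unaffected.
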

\begin{proof}
    This is obvious if $n = 1$, so suppose we know the statement for Laurent polynomials of degree $(n-1)$, and suppose $p(z)=0$ for some non-zero central element $z$.
    Apply polynomial long division to write $p(\tau)=q(\tau)(\tau-z)+c$ for some constant $c\in D$.
    Evaluating at $z$ implies $c=0$.
    So, we have $p(\tau)=q(\tau)(\tau-z)$, where $q(\tau)$ is a Laurent polynomial of degree $(n-1)$.
    Moreover, if $p(z')=0$ for some non-zero, central element $z'\neq z$, then $0=p(z')=q(z')(z-z')$ implies that $q(z')=0$.
    Therefore, there are $\le n-1$ such $z'$, and the statement follows.
\end{proof}
\begin{remark}
    There can be more non-central roots.
    For instance, the quaternions form a skew field that is a $4$-dimensional $\mathbb R$-vector space $\mathbb R\oplus\mathbb Ri\oplus\mathbb Rj\oplus\mathbb Rij$ with multiplication specified by $i^2=j^2=-1, ij=-ji$.
    In this skew field, all the elements $\{\pm 1,\pm i,\pm j,\pm ij\}$ (and their conjugates) are roots of the quadratic polynomial $x^2+1$.
    In general, Gordon and Motzkin \cite{gordonmotzkin} showed that the roots of $p(\gt)$ fall into $\le n$ conjugacy classes of $D$, and each conjugacy class contains either $0$, $1$, or infinitely many roots.
\end{remark}
\begin{lemma}\label{l:mult}
    Let $z$ be a non-zero, central element in $D$.
    If $p(z)\neq 0$, then
    \[ {R\over Rp(\tau)} \xrightarrow{(\tau-z)\cdot} {R\over Rp(\tau)}
    \]
    is an isomorphism.
\end{lemma}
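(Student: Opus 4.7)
The plan is to first establish a left-Bezout identity $1 = a(\tau - z) + bp(\tau)$ in $R$, use it to show that left multiplication by $(\tau - z)$ is surjective on $R/Rp(\tau)$, and then upgrade surjectivity to bijectivity using that $R/Rp(\tau)$ is a finite-dimensional left $D$-vector space.

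To produce the Bezout identity I would argue that $R(\tau-z) + Rp(\tau) = R$. Since $R$ is a left PID, this left ideal is principal, say $Rd$, with $\tau - z = ad$ and $p(\tau) = ed$ for some $a, e \in R$. Degree additivity forces $\deg(a) + \deg(d) = 1$. If $\deg(d) = 0$ then $d$ is a unit in $R$ and the ideal is all of $R$. Otherwise $a$ is a unit, so $d$ is a unit multiple of $\tau - z$, giving $p(\tau) = c(\tau - z)$ for some $c \in R$; but then, using that $z$ is central in $D$ and hence $ev_z$ is a ring homomorphism, one computes $p(z) = ev_z(c) \cdot (z - z) = 0$, contradicting the hypothesis.

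For surjectivity, right-multiplying the Bezout identity by an arbitrary $r \in R$ and using the centrality of $\tau - z$ in $R$ yields
\[
r = ra(\tau - z) + rbp(\tau) = (\tau - z)(ra) + (rb)p(\tau) \equiv (\tau - z)(ra) \pmod{Rp(\tau)}.
\]
For injectivity, after left-multiplying $p(\tau)$ by an appropriate unit (which leaves the left ideal $Rp(\tau)$ unchanged) one may assume $p(\tau)$ is a monic polynomial of some degree $N$ with nonzero constant term. Division with remainder then exhibits $\{1, \tau, \dots, \tau^{N-1}\}$ as a left $D$-basis for $R/Rp(\tau)$, so the quotient is a finite-dimensional left $D$-vector space. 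Centrality of $\tau - z$ ensures that left multiplication by it is a left-$D$-linear endomorphism, and rank-nullity upgrades surjectivity to bijectivity.

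The main subtle point is that injectivity does not follow directly from the Bezout identity, because $Rp(\tau)$ is only a left ideal and an expression like $bp(\tau)r$ need not lie in it. The centrality of $z$ plays a double role --- enabling both the evaluation argument in the Bezout step and the $D$-linearity in the dimension argument --- and is precisely what circumvents this noncommutative obstruction.
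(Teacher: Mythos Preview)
Your proof is correct and follows essentially the same route as the paper: show that $R(\tau-z)+Rp(\tau)=R$ (the paper phrases this as ``$\tau-z$ does not divide $p(\tau)$, hence the left ideal is the whole ring''), deduce surjectivity, and then invoke finite $D$-dimension for injectivity. One harmless slip: you say ``right-multiplying'' the Bezout identity by $r$, but the computation $r = ra(\tau-z)+rbp(\tau)$ you wrote is a \emph{left} multiplication; the rest of the argument is unaffected.
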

\begin{proof}
    Since $z$ is central in $D$, the map $(\tau-z)\cdot$ is left $R$-linear.
    Since $p(z)\neq 0$, $\tau-z$ does not divide $p(\tau)$ and hence, the left ideal generated by $p(\tau)$ and $\tau-z$ is the full ring.
    Therefore, the map $(\tau-z)\cdot$ is onto.
    Since the domain and range are finite-dimensional $D$-vector spaces of the same dimension, this implies the map is an isomorphism.
\end{proof}

\subsection*{Vanishing results} 
Now we can prove the main vanishing result of this section.
\begin{proposition}\label{finite}
    Suppose $D$ is a skew field whose center contains $\mathbb C$.
    Then there is a positive integer $m$,\footnote{It follows from the proof that $m$ is determined by $H_k(X;D[\tau,\tau^{-1}])\oplus H_{k-1}(X;D[\tau,\tau^{-1}])$.}
    such that for any $d$ relatively prime to $m$, we have $H_k(X_d;D)=0$ if and only if $H_k(X;D)=0$.
\end{proposition}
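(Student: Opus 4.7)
The plan is to exploit the long exact sequence
\[
\cdots \to H_k(X;R) \xrightarrow{(\tau^d-1)\cdot} H_k(X;R) \to H_k(X_d;D) \to H_{k-1}(X;R) \xrightarrow{(\tau^d-1)\cdot} H_{k-1}(X;R) \to \cdots
\]
coming from \eqref{Rmodules} together with the preceding lemma. From it, $H_k(X_d;D)=0$ is equivalent to $(\tau^d-1)\cdot$ being surjective on $H_k(X;R)$ and injective on $H_{k-1}(X;R)$, and specializing to $d=1$ gives the analogous characterization of $H_k(X;D)=0$ in terms of $(\tau-1)\cdot$. So the goal is to choose $m$ so that, for $d$ coprime to $m$, the surjectivity/injectivity behavior of $(\tau^d-1)\cdot$ and $(\tau-1)\cdot$ on $H_k(X;R)\oplus H_{k-1}(X;R)$ coincides.

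First I would apply the structure theorem over the left and right PID $R$ to write
\[
H_j(X;R)\cong R^{a_j}\oplus\bigoplus_i R/Rp_{j,i}(\tau),\qquad j\in\{k,k-1\},
\]
with non-zero $p_{j,i}$, and analyze the action of $(\tau^d-1)\cdot$ on each summand. On a free summand $R$, the map is injective but has non-zero cokernel, and the same holds for $(\tau-1)\cdot$, so free summands contribute identically for $d$ and for $d=1$. On a torsion summand $R/Rp_{j,i}(\tau)$, which is finite-dimensional over $D$, the hypothesis $\mathbb{C}\subseteq Z(D)$ makes every $(\tau-\zeta)$ with $\zeta\in\mathbb{C}$ central in $R$, so the factors in
\[
\tau^d-1=\prod_{\zeta^d=1,\,\zeta\in\mathbb{C}}(\tau-\zeta)
\]
commute and the product is valid in $R$. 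Lemma \ref{l:mult} then shows $(\tau^d-1)\cdot$ is an automorphism of $R/Rp_{j,i}(\tau)$ if and only if no $d$-th root of unity is a root of $p_{j,i}$; in particular, $(\tau-1)\cdot$ is an automorphism if and only if $p_{j,i}(1)\neq 0$.

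Next I would choose $m$. By Lemma \ref{centralroots}, each $p_{j,i}$ has only finitely many non-zero central roots, so the set $\mathcal{U}$ of roots of unity in $\mathbb{C}$ that are roots of some $p_{j,i}$ with $j\in\{k,k-1\}$ is finite. Let $m$ be the least common multiple of the orders of the elements of $\mathcal{U}\setminus\{1\}$ (and $m=1$ if this set is empty). For any $d$ with $\gcd(d,m)=1$, an element $\zeta\in\mathcal{U}$ of order $n$ is a $d$-th root of unity iff $n\mid d$; but $n\mid m$ and $\gcd(d,m)=1$ force $n=1$. So the $d$-th roots of unity among the roots of the $p_{j,i}$ are exactly $\mathcal{U}\cap\{1\}$, just as for $d=1$. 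Combined with the previous step, this gives $H_k(X_d;D)=0\iff H_k(X;D)=0$.

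The main subtlety is the factorization of $\tau^d-1$ in the non-commutative ring $R$: it is the hypothesis $\mathbb{C}\subseteq Z(D)$ that makes the linear factors central, so the product is unambiguous and Lemma \ref{l:mult} applies to each factor. After that, the combinatorial choice of $m$ via the finite set $\mathcal{U}$, and the separate bookkeeping for free $R$-summands, are straightforward.
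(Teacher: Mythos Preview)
Your argument is correct and matches the paper's: same long exact sequence, same structure theorem for $R$-modules, same factorization $\tau^d-1=\prod_{\zeta^d=1}(\tau-\zeta)$ over the central subfield $\mathbb C$, and the same appeal to Lemmas~\ref{centralroots} and~\ref{l:mult} (note only that Lemma~\ref{l:mult} as stated gives just the ``if'' direction of your biconditional---the converse is immediate, since a central root $\zeta$ of $p$ yields $p(\tau)=q(\tau)(\tau-\zeta)$ and hence $q(\tau)$ is a nonzero element of $R/Rp(\tau)$ killed by $(\tau-\zeta)\cdot$). The only cosmetic difference is that the paper pre-sorts the torsion summands into those with $(\tau-1)\mid p_i(\tau)$ and those without, whereas you handle this uniformly by excluding $1$ from $\mathcal U$ when defining $m$.
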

\begin{proof}
    Break homology into three parts
    \[ 
    H_j(X;R)\cong R^{n_1} \oplus \bigoplus_{i=1}^{n_2} {R\over R(\tau-1)q_i(\tau)} \oplus \bigoplus_{i = 1}^{n_3} \frac{R}{Rp_i(\tau)} 
    \] 
    where the $q_i(\tau)$ are non-zero and the $p_i(\tau)$ are not divisible by $\tau-1$.
    Note that:
    \begin{enumerate}
        \item Multiplication by $\tau^d-1$ (in particular, by $\tau-1$) on the first factor is always injective and never surjective (unless $n_1=0$).
        \item Multiplication by $\tau-1$ on the second factor is neither injective nor surjective (unless $n_2=0$) and hence the same is true for $\tau^d-1$.
        \item By Lemma \ref{centralroots}, there is a finite set $S$ of roots of unity in $\mathbb C$ that occur as roots of $p_1(\tau),\dots, p_{n_3}(\tau)$.
        Let $m_j$ be the product of the orders of elements of $S$.
        If $d$ is relatively prime to $m_j$, then the $d$-th roots of unity $\{1,e^{2\pi i/d},\dots,e^{2\pi i(d-1)/d}\}$ are not in $S$, since
        \begin{itemize}
            \item $1$ is not in $S$ because the $p_i(\tau)$ are not divisible by $\tau-1$, and
            \item the non-trivial $d$-th roots of unity are not in $S$ because their orders are non-trivial factors of $d$, and hence do not divide $m_j$.
        \end{itemize}
        So, $p_i(e^{2\pi i l/d})\neq 0$ for all $p_i,l$, and we see by Lemma \ref{l:mult} that multiplication by $\tau^{d}-1=\prod_{l=1}^d (\tau-e^{2\pi i l/d})$ is an isomorphism on the third factor.
    \end{enumerate}
    We conclude that if $d$ is relatively prime to $m_j$, then multiplication by $\tau^d-1$ is injective (respectively surjective) on $H_j(X;R)$ if and only if $\tau-1$ is.
    Therefore, the long exact homology sequences for $X_1$ and $X_d$ imply that $H_k(X_d;D)=0$ if and only if $H_k(X_1;D)=0$ as long as $d$ is relatively prime to $m_k m_{k-1}$.
\end{proof}

For the sake of comparison, here is an $\mathbb F_p$-vanishing result that can be used to establish the main claim of Section~\ref{primeproof}.
\begin{proposition}
    Suppose $D$ has characteristic $p$.
    Then $H_k(X;D)=0$ if and only if $H_k(X_{p^r};D)=0$.
\end{proposition}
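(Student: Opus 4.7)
The plan is to follow the same pattern as the proof of Proposition~\ref{finite}, with the key simplification that in characteristic $p$ the Frobenius identity $\tau^{p^r}-1 = (\tau-1)^{p^r}$ holds in $R = D[\tau,\tau^{-1}]$. This means multiplication by $\tau^{p^r}-1$ on $H_*(X;R)$ is the $p^r$-fold iterate of multiplication by $\tau-1$, so the only root of unity that could make a difference between the $d=1$ and $d=p^r$ cases is $1$ itself, and this is already split off by the PID structure-theorem decomposition.

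First I would decompose
\[ H_j(X;R)\cong R^{n_1}\oplus\bigoplus_{i=1}^{n_2}R/R(\tau-1)q_i(\tau)\oplus\bigoplus_{i=1}^{n_3}R/Rp_i(\tau), \]
with the $p_i(\tau)$ not divisible by $\tau-1$, and check on each summand that multiplication by $\tau-1$ and by $(\tau-1)^{p^r}$ have the same injectivity/surjectivity behavior. On the free part $R^{n_1}$ both maps are injective and neither is surjective unless $n_1=0$. On the $(\tau-1)$-torsion summand $R/R(\tau-1)q_i(\tau)$, the element $q_i(\tau)$ is killed by $\tau-1$, so it is not injective; since this summand is a finite-dimensional $D$-vector space, it also fails to be surjective, and the same failure is inherited by the $p^r$-th iterate. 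On $R/Rp_i(\tau)$, Lemma~\ref{l:mult} applied to the central element $z=1 \in D$ shows that $\tau-1$ acts as an isomorphism, and hence so does $(\tau-1)^{p^r}$.

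Finally, using $H_*(X_d;D)\cong H_*(X;R/(\tau^d-1))$ together with the long exact sequences coming from \eqref{Rmodules} for $d=1$ and $d=p^r$, the vanishing $H_k(X_d;D)=0$ is equivalent to multiplication by $\tau^d-1$ being surjective on $H_k(X;R)$ and injective on $H_{k-1}(X;R)$. By the previous step this condition is the same for $d=1$ and for $d=p^r$, which gives the desired equivalence $H_k(X;D)=0 \iff H_k(X_{p^r};D)=0$.

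There is no real obstacle here: the proposition is a characteristic-$p$ shadow of Proposition~\ref{finite}, and the Frobenius identity collapses the role of roots of unity to the single root $z=1$. The only point that deserves mention is that on the $(\tau-1)$-torsion summands, non-injectivity of $\tau-1$ transfers to $(\tau-1)^{p^r}$ by finite-dimensionality over $D$, which is immediate.
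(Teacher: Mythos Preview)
Your proof is correct and shares the paper's key ingredients (the Frobenius identity $\tau^{p^r}-1=(\tau-1)^{p^r}$ and the long exact sequence), but it does more work than necessary. The paper bypasses the PID decomposition entirely: once you know that multiplication by $\tau^{p^r}-1$ is the $p^r$-fold iterate of multiplication by $\tau-1$, the general fact that an endomorphism $f$ of any module is injective (respectively surjective) if and only if $f^N$ is, for any $N\geq 1$, immediately gives the equivalence of injectivity/surjectivity on each $H_j(X;R)$. There is no need to split into free, $(\tau-1)$-torsion, and coprime-torsion summands or to invoke Lemma~\ref{l:mult}. Your summand-by-summand analysis is what one would do if, as in Proposition~\ref{finite}, the polynomial $\tau^d-1$ were not a power of a single linear factor; in characteristic $p$ that complication disappears, and the paper exploits this.
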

\begin{proof}
    In characteristic $p$ we have the equation $\tau^{p^r}-1=(\tau-1)^{p^r}$.
    It implies that multiplication by $\tau^{p^r}-1$ is injective (respectively surjective) if and only if multiplication by $\tau-1$ has the same property.
    By the long exact homology sequence, $H_k(X_{p^r};D)$ vanishes if and only if $\tau^{p^r}-1$ is surjective on $H_k(X;R)$ and injective on $H_{k-1}(X;R)$, so we conclude that $H_k(X_{p^r};D)$ vanishes if and only if $H_k(X_1;D)$ does.
\end{proof}

\section{Proof for general branched covers}
First, recall the statement we are proving.
\begin{theorem*}
    There is a positive integer $m$ (determined by $M,V_1$ and $V_2$) so that if $d$ is relatively prime to $m$, the $d$-fold cyclic branched cover $\widehat M$ satisfies the Singer conjecture: 
    \[ 
    b^{(2)}_{\neq n/2}(\widehat M)=0. 
    \]
\end{theorem*}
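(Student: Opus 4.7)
The plan is to parallel the argument of Section~\ref{primeproof}, but to replace the mod-$p$ skew field $D_{\mathbb F_p G}$ with a characteristic-zero skew field whose center contains $\mathbb C$, so that Proposition~\ref{finite} can be invoked in place of $p$-monotonicity. First, I reduce to the case where $\pi_1 M$ is special, which guarantees $\pi_1 \widehat M$ is also special by Giralt's theorem. Then, setting $G := \pi_1 M$, I take $D := D_{\mathbb C G}$, the Malcev--Neumann division closure of $\mathbb C G$ with respect to any bi-ordering on $G$. The scalar coefficients $\mathbb C$ sit inside the center of $D$, so the hypothesis of Proposition~\ref{finite} is satisfied.

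Next, I run the $D$-homology computations of Section~\ref{primeproof} verbatim. The inf formula (Theorem~3.6 of \cite{aos}) combined with the equality $b_i(Y; \mathbb C) = b_i(Y; \mathbb Q)$ for finite complexes $Y$ and Corollary~4.2 of \cite{aos} gives $b_i(M; D) = b_i^{(2)}(M)$ and $b_i(V_0; D) = b_i^{(2)}(V_0)$ for each component $V_0$ of $V$, so Dodziuk's theorem yields $H_{\neq n/2}(M; D) = 0$ and $H_{\neq n/2-1}(V; D) = 0$. Excision, Poincar\'e duality, and the long exact sequences of the pairs $(M, M_0)$ and $(\widehat M, M_0')$ then produce $H_{<n/2}(M_0; D) = 0$ and $H_{\neq n/2+1}(\widehat M, M_0'; D) = 0$, exactly as in Section~\ref{primeproof}.

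The new input enters at the level of the cyclic cover $M_0' \to M_0$. I apply Proposition~\ref{finite} with $X = M_0$, the surjection $\phi : \pi_1 M_0 \to \mathbb Z$ from Section~\ref{primeproof}, and the map $\psi : \mathbb Z \pi_1 M_0 \to D$ induced by $\pi_1 M_0 \to G \hookrightarrow D$. For each $k < n/2$ the proposition supplies an integer $m_k$, determined by $M, V_1, V_2$, such that for $d$ coprime to $m_k$ the vanishing of $H_k(M_0; D)$ is equivalent to the vanishing of $H_k(M_0'; D)$. Taking $m$ to be the product of these $m_k$, for $d$ coprime to $m$ we conclude $H_{<n/2}(M_0'; D) = 0$. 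The long exact sequence of $(\widehat M, M_0')$ then gives $H_{<n/2}(\widehat M; D) = 0$, and Poincar\'e duality upgrades this to $H_{\neq n/2}(\widehat M; D) = 0$.

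It remains to translate this vanishing back to $L^2$-Betti numbers. Since $\pi_1 \widehat M$ is special, Corollary~4.2 of \cite{aos} together with $b_i(Y; \mathbb Q) = b_i(Y; \mathbb C)$ give $b_i^{(2)}(\widehat M) = b_i(\widehat M; D_{\mathbb C \pi_1 \widehat M})$, and the inf formula shows $b_i(\widehat M; D_{\mathbb C \pi_1 \widehat M}) \le b_i(\widehat M; D)$, because the left infimum runs over all finite covers of $\widehat M$ while the right one is restricted to covers pulled back from $M$. Hence $H_{\neq n/2}(\widehat M; D) = 0$ implies the Singer conjecture for $\widehat M$. With Proposition~\ref{finite} already in hand, the only real obstacle to executing this plan is the careful verification that Corollary~4.2 of \cite{aos} and the inf formula carry over from $\mathbb F_p$-coefficients to $\mathbb C$-coefficients; this comes down to the invariance of Betti numbers under the extension $\mathbb Q \subset \mathbb C$ and is routine.
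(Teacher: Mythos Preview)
Your proposal is correct and follows the paper's own proof essentially verbatim: choose $D=D_{\mathbb C G}$ so that Proposition~\ref{finite} applies, rerun the $D$-homology computations of Section~\ref{primeproof} to get $H_{<n/2}(M_0;D)=0$, use Proposition~\ref{finite} in place of $p$-monotonicity to push this to $M_0'$, and finish via the long exact sequence, Poincar\'e duality, and the monotonicity $b_i^{(2)}(\widehat M)\le b_i(\widehat M;D)$. The only cosmetic differences are that you spell out the reduction to special $\pi_1 M$ and the product $m=\prod m_k$ more explicitly than the paper does.
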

\begin{proof}
    Let $G=\pi_{1} M$.
    We use the skew field $D:=D_{\mathbb CG}$ since its center contains $\mathbb C$.
    It is clear (from the inf formula, for instance) that $b_*(X; D)=b_*(X;D_{\mathbb QG})$ so this skew field works just as well for computing $L^2$-Betti numbers.
    We want to apply Proposition~$\ref{finite}$.
    To do this, set $X=M_0=M-(V_1\cap V_2\times\mathbb D^2)$, $\Gamma=\pi_1M_0,$ and let $\phi:\pi_1M_0\rightarrow\mathbb Z$ be the surjective homomorphism used to defined the branched covers.
    The skew field $D$ is a right $\mathbb Z\Gamma$-module via the homomorphism $\psi:\mathbb Z[\Gamma]\rightarrow\mathbb Z[G]\hookrightarrow D$ induced by inclusion $M_{0} \into M$.
    Proceeding as in Section~\ref{primeproof}, we conclude that $H_{<n/2}(M_0;D)=0$.
    So, by Proposition~\ref{finite}, there is a positive integer $m$ (determined by $H_*(X;D[\tau,\tau^{-1}])$, hence by $X,\phi,$ and $\psi$, hence by $M,V_1$ and $V_2$)
    such that for $d$ relatively prime to $m$ the $d$-fold cyclic cover $M'_0\rightarrow M_0$ satisfies $H_{<n/2}(M_0';D)=0$.
    Therefore, as in Section~\ref{primeproof}, the $d$-fold branched cover $\widehat M$ has $H_{\neq n/2}(\widehat M;D)=0$ and consequently $b^{(2)}_{\neq n/2}(\widehat M)=0$.
\end{proof}

% \bib, bibdiv, biblist are defined by the amsrefs package.
\begin{bibdiv}
\begin{biblist}

\bib{aos2}{article}{
      author={Avramidi, G.},
      author={Okun, B.},
      author={Schreve, K.},
       title={Edge subdivisions and the {$L^2$}-homology of right-angled
  {C}oxeter groups},
        date={2024},
     journal={arXiv preprint arXiv:2411.08009},
}

\bib{aos}{article}{
      author={Avramidi, G.},
      author={Okun, B.},
      author={Schreve, K.},
       title={Homology growth, hyperbolization, and fibering},
        date={2024},
        ISSN={1016-443X,1420-8970},
     journal={Geom. Funct. Anal.},
      volume={34},
      number={2},
       pages={303\ndash 376},
         url={https://doi.org/10.1007/s00039-024-00667-w},
      review={\MR{4715365}},
}

\bib{bhw}{article}{
      author={Bergeron, N.},
      author={Haglund, F.},
      author={Wise, D.},
       title={Hyperplane sections in arithmetic hyperbolic manifolds},
        date={2011},
        ISSN={0024-6107,1469-7750},
     journal={J. Lond. Math. Soc. (2)},
      volume={83},
      number={2},
       pages={431\ndash 448},
         url={https://doi.org/10.1112/jlms/jdq082},
      review={\MR{2776645}},
}

\bib{blls}{article}{
      author={Bergeron, N.},
      author={Linnell, P.},
      author={L\"uck, W.},
      author={Sauer, R.},
       title={On the growth of {B}etti numbers in {$p$}-adic analytic towers},
        date={2014},
        ISSN={1661-7207,1661-7215},
     journal={Groups Geom. Dyn.},
      volume={8},
      number={2},
       pages={311\ndash 329},
         url={https://doi.org/10.4171/GGD/227},
      review={\MR{3231217}},
}

\bib{cohnbook}{book}{
      author={Cohn, P.~M.},
       title={Free ideal rings and localization in general rings},
      series={New Mathematical Monographs},
   publisher={Cambridge University Press, Cambridge},
        date={2006},
      volume={3},
        ISBN={978-0-521-85337-8; 0-521-85337-0},
         url={https://doi.org/10.1017/CBO9780511542794},
}

\bib{dodziuk}{article}{
      author={Dodziuk, J.},
       title={{$L^2$}\ harmonic forms on rotationally symmetric {R}iemannian
  manifolds},
        date={1979},
        ISSN={0002-9939,1088-6826},
     journal={Proc. Amer. Math. Soc.},
      volume={77},
      number={3},
       pages={395\ndash 400},
         url={https://doi.org/10.2307/2042193},
      review={\MR{545603}},
}

\bib{donnellyxavier}{article}{
      author={Donnelly, H.},
      author={Xavier, F.},
       title={On the differential form spectrum of negatively curved
  {R}iemannian manifolds},
        date={1984},
        ISSN={0002-9327,1080-6377},
     journal={Amer. J. Math.},
      volume={106},
      number={1},
       pages={169\ndash 185},
         url={https://doi.org/10.2307/2374434},
      review={\MR{729759}},
}

\bib{fox3}{article}{
      author={Fox, R.~H.},
       title={Free differential calculus. {III}. {S}ubgroups},
        date={1956},
        ISSN={0003-486X},
     journal={Ann. of Math. (2)},
      volume={64},
       pages={407\ndash 419},
         url={https://doi.org/10.2307/1969592},
      review={\MR{95876}},
}

\bib{giralt}{article}{
      author={Giralt, A.},
       title={Cubulation of {G}romov-{T}hurston manifolds},
        date={2017},
        ISSN={1661-7207,1661-7215},
     journal={Groups Geom. Dyn.},
      volume={11},
      number={2},
       pages={393\ndash 414},
         url={https://doi.org/10.4171/GGD/401},
      review={\MR{3668045}},
}

\bib{gordonmotzkin}{article}{
      author={Gordon, B.},
      author={Motzkin, T.~S.},
       title={On the zeros of polynomials over division rings},
        date={1965},
        ISSN={0002-9947},
     journal={Trans. Amer. Math. Soc.},
      volume={116},
       pages={218\ndash 226},
         url={https://doi.org/10.2307/1994114},
      review={\MR{195853}},
}

\bib{gromovasymptotic}{incollection}{
      author={Gromov, M.},
       title={Asymptotic invariants of infinite groups},
        date={1993},
   booktitle={Geometric group theory, {V}ol.\ 2 ({S}ussex, 1991)},
      series={London Math. Soc. Lecture Note Ser.},
      volume={182},
   publisher={Cambridge Univ. Press, Cambridge},
       pages={1\ndash 295},
      review={\MR{1253544}},
}

\bib{gromovthurston}{article}{
      author={Gromov, M.},
      author={Thurston, W.},
       title={Pinching constants for hyperbolic manifolds},
        date={1987},
        ISSN={0020-9910},
     journal={Invent. Math.},
      volume={89},
      number={1},
       pages={1\ndash 12},
         url={https://doi.org/10.1007/BF01404671},
      review={\MR{892185}},
}

\bib{hw}{article}{
      author={Haglund, F.},
      author={Wise, D.},
       title={A combination theorem for special cube complexes},
        date={2012},
        ISSN={0003-486X,1939-8980},
     journal={Ann. of Math. (2)},
      volume={176},
      number={3},
       pages={1427\ndash 1482},
         url={https://doi.org/10.4007/annals.2012.176.3.2},
      review={\MR{2979855}},
}

\bib{hughes}{article}{
      author={Hughes, I.},
       title={Division rings of fractions for group rings},
        date={1970},
        ISSN={0010-3640,1097-0312},
     journal={Comm. Pure Appl. Math.},
      volume={23},
       pages={181\ndash 188},
         url={https://doi.org/10.1002/cpa.3160230205},
      review={\MR{263934}},
}

\end{biblist}
\end{bibdiv}

\bigskip
\noindent Grigori Avramidi, Max Planck Institute for Mathematics, Bonn, Germany, 53111\\
\url{gavramidi@mpim-bonn.mpg.de} \\

\noindent Boris Okun, University of Wisconsin-Milwaukee, Department of Mathematical Sciences, PO Box 413, Milwaukee, WI 53201-0413\\
\url{okun@uwm.edu} \\

\noindent Kevin Schreve, Louisiana State University, Department of Mathematics, Baton Rouge, LA, 70806 \\
\url{kschreve@lsu.edu}

\end{document}